\newtheorem{thm}{Theorem}[section]
\newtheorem{lem}[thm]{Lemma}
\newtheorem{prop}[thm]{Proposition}
\numberwithin{equation}{section}
\def\Fo{\mathbb{F}}
\def\F{\Fo_p}
\def\fx4{\frac{x}{4}}
\def\P{\mathcal{P}}
\def\M{\mathcal{M}}
\def\N{\mathcal{N}}
\def\L{\mathcal{L}}
\def\U{\mathcal{U}}
\def\V{\mathcal{V}}
\def\W{\mathcal{W}}
\def\A{\mathcal{A}}
\def\B{\mathcal{B}}
\def\C{\mathcal{C}}
\def\D{\mathcal{D}}
\def\S{\mathcal{S}}
\def\3F2{{}_3\hspace{-1pt}F_2}
\def\2F1{{}_2\hspace{-1pt}F_1}
\def\h2F1{{}_2\hspace{-1pt}\widehat{F}_1}
\def\Fq2{\F_{q^2}}
\def\c4{\chi_4}
\def\oc4{\overline{\chi_4}}
\def\l({\left(}
\def\r){\right)}
\def\dim{\mathrm{dim \ }}
\def\GL{\mathrm{GL }}
\title{Classification of certain types of maximal  matrix subalgebras}
\author{\\ \\ John Eggers\\
Department of Mathematics\\
University of California at San Diego\\
La Jolla, CA  92093-0112 \\
jeggers@ucsd.edu
\\ \\
Ron Evans\\
Department of Mathematics\\
University of California at San Diego\\
La Jolla, CA  92093-0112 \\
revans@ucsd.edu
 \\ \\
Mark Van Veen\\
Varasco LLC \\ 
2138 Edinburg Avenue\\ 
Cardiff by the Sea, CA 92007  \\
mark@varasco.com
}
\date{}
\begin{document}

\maketitle

\noindent 2010 \textit{Mathematics Subject Classification}.
15B33, 16S50\\

\noindent \textit{Key words and phrases}.
matrix ring over a field, intersection of matrix subalgebras,
nonunital intersections, subalgebras of maximum dimension, 
parabolic subalgebra,
semi-simple Lie algebra, radical

\begin{abstract}
Let $\M_n(K)$ denote the algebra of $n \times n$ matrices over a field $K$
of characteristic zero.  A nonunital subalgebra $\N \subset \M_n(K)$
will be called a {\em nonunital intersection} if $\N$ is the intersection
of two unital subalgebras of $\M_n(K)$.   Appealing to recent work of Agore,
we show that for  $n \ge 3$, the dimension (over $K$) of a nonunital
intersection is at most $(n-1)(n-2)$, and we completely classify the
nonunital intersections of maximum dimension $(n-1)(n-2)$.
We also classify the unital subalgebras of maximum dimension
properly contained in a parabolic subalgebra of 
maximum dimension in $\M_n(K)$.

\end{abstract}

\maketitle

\section{Introduction}
Let $\M_n(F)$ denote the algebra of $n \times n$ matrices over a field $F$.
For some interesting sets $\Lambda$ of subspaces $\S \subset \M_n(F)$, 
those $\S \in \Lambda$ of maximum dimension over $F$ have been completely 
classified.
For example, a theorem of Gerstenhaber and Serezhkin \cite[Theorem 1]{Pazzis} 
states that when $\Lambda$ is the 
set of subspaces $\S \subset \M_n(F)$ 
for which every matrix in $\S$ is nilpotent,
then each $\S \in \Lambda$ of maximum dimension is
conjugate to the algebra of all strictly upper triangular matrices   
in $\M_n(F)$.
For another example,  it is shown in \cite[Prop. 2.5]{Agore} 
that when $\Lambda$ is the set of proper unital subalgebras 
$\S \subset \M_n(F)$ and $F$ is an algebraically closed field of characteristic 
zero, then each $\S \in \Lambda$ of maximum dimension is a
parabolic subalgebra of maximum dimension in $\M_n(F)$. 

The goal of this paper is to classify the elements in 
$\Lambda$ of maximum dimension in the cases
$\Lambda = \Gamma$ and $\Lambda = \Omega$, where the sets
$\Gamma$ and $\Omega$ are defined below.

In Isaac's text \cite[p. 161]{Isaacs}, every ring is required to have a unity,
but the unity in a subring need not be the same as the unity in 
its parent ring.  Under this definition, a ring may have subrings whose
intersection is not a subring.  This motivated us to study examples
of pairs of unital subrings in $\M_n(K)$ whose intersection $\N$ is nonunital,
where $K$ is a field of characteristic zero.
We call such $\N$ a {\em nonunital intersection}
and we let $\Gamma$ denote the set of all nonunital intersections
$\N \subset \M_n(K)$.  Note that $\Gamma$ is closed under transposition and 
conjugation, i.e., if $\N \in \Gamma$, then $\N^{\,\rm {T}} \in \Gamma$
and $S^{-1} \N S \in \Gamma$ for any invertible $S \in \M_n(K)$. 

In order to define $\Omega$, we need to establish some notation.
For brevity, write $\M = \M_n = \M_n(K)$.
In the spirit of \cite [p. viii]{GW}, 
we define a subalgebra of $\M$ to be a vector subspace of $\M$ over $K$
closed under the multiplication of $\M$ (cf. \cite[p. 2]{GW}); thus
a subalgebra need not have a unity, and the unity of a unital subalgebra
need not be a unity of the parent algebra.
Subalgebras $\A, \ \B \subset \M$ are said to be similar if
$\A = \{S^{-1} B S:  B \in \B\}$ for some invertible $S \in \M$.
The notation $\M[R_n]$ will be used for the subalgebra of $\M$
consisting of those matrices whose $n$-th row is zero.   Similarly,
$\M[R_n,C_n]$ indicates that the $n$-th row and $n$-th column are zero, etc.
For $1 \le i, j \le n$, let $E_{i,j}$ denote the elementary matrix 
in $\M$ with a single entry 1 in row $i$,
column $j$, and $0$ in each of the other $n^2-1$ positions.
The identity matrix in $\M$ will be denoted by $I$.
For the maximal parabolic subalgebra 
$\P:=\M[R_n] + K E_{n,n}$ in $\M$, define
$\Omega$ to be the set of proper subalgebras $\B$ of  $\P$
with $\B \ne \M[R_n]$.

We now describe Theorems 3.1--3.3, our main results.
Theorem 3.1 shows that $\dim\N \le (n-1)(n-2)$ for each $\N \in \Gamma$.
Theorem 3.2 shows that up to similarity, $\W: = \M[R_n,R_{n-1},C_n]$ 
and $\W ^ {\rm {T}}: = \M[R_n,C_{n-1},C_n]$ are the only subalgebras
in $\Gamma$ having maximum  dimension $(n-1)(n-2)$. 
In Theorem 3.3,  
we show that $\dim \B \le n^2 -2n +3$ for each $\B \in \Omega$,
and we classify all $\B \in \Omega$ of maximum dimension $n^2 -2n +3$.

The proofs of our theorems depend on four lemmas which are proved
in Section 2.  Lemma 2.1 shows that $\W$  (and hence also $\W ^ {\rm {T}}$)
is a nonunital intersection of dimension $(n-1)(n-2)$ when $n \ge 3$.
Lemmas 2.2 and 2.3 show that $\dim \L \le n(n-1)$ for any nonunital
subalgebra $\L \subset \M$, and when equality holds, $\L$ must be similar
to $\M[R_n]$ or $\M[C_n]$.  
(Thus if $\Lambda$ denotes the set of nonunital subalgebras $\L \subset \M$,
Lemmas 2.2 and 2.3 classify those $\L \in \Lambda$ of maximum dimension.)
Lemma 2.4 shows that if $\U \subset \M$
is a subalgebra with unity different from $I$, then some conjugate of $\U$
is contained in $\M[R_n,C_n]$.

\section{Lemmas}

Recall the definition $\W: = \M[R_n,R_{n-1},C_n]$.

%Lemma 2.1%
\begin{lem}
For $n \ge 3$, $\W \in \Gamma$ and $\ \dim \W = (n-1)(n-2)$.
\end{lem}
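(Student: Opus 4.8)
The plan is to unpack ``$\W \in \Gamma$'' into three separate checks --- that $\W$ is a subalgebra, that it is nonunital, and that it is the intersection of two unital subalgebras --- after which the dimension count is immediate. First I would record $\dim\W = (n-1)(n-2)$ by counting admissible positions: a matrix in $\W = \M[R_n,R_{n-1},C_n]$ can be nonzero only in rows $1,\dots,n-2$ and columns $1,\dots,n-1$, giving $(n-2)(n-1)$ free entries. Closure under multiplication is routine: the $i$-th row of $AB$ is (row $i$ of $A$)$\,B$, so the vanishing of rows $n-1$ and $n$ is inherited, and the $n$-th column of $AB$ is $A\,$(column $n$ of $B$), so the vanishing of column $n$ is inherited; hence $\W$ is a subalgebra. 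To see that $\W$ is nonunital I would show no $e\in\W$ can be a two-sided identity: testing $e$ against $E_{1,n-1}\in\W$ (which lies in $\W$ since $n\ge 3$) forces the $(n-1)$-st row of $e$ to equal the standard row vector $e_{n-1}^{\mathrm{T}}$, contradicting membership in $\W$, which requires that row to vanish.

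The substance of the lemma is exhibiting unital subalgebras $\U_1,\U_2$ with $\U_1\cap\U_2=\W$. I would take $\U_1=\M[R_n,C_n]$, the corner algebra on indices $1,\dots,n-1$, which is unital with unity $I-E_{n,n}$, and $\U_2$ the stabilizer of the hyperplane $H=\{y\in K^n: y_{n-1}=y_n\}$, that is, $\U_2=\{A\in\M : AH\subseteq H\}$; equivalently $\U_2=\{A\in\M: \phi A\in K\phi\}$ for the covector $\phi=e_{n-1}^{\ast}-e_n^{\ast}$. This $\U_2$ is a parabolic subalgebra, hence unital (it contains $I$). Both subalgebras plainly contain $\W$, since every $A\in\W$ has rows $n-1$ and $n$ zero and therefore satisfies $\phi A=0$.

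For the reverse inclusion I would take $A\in\U_1\cap\U_2$ and argue that the $(n-1)$-st row of $A$ vanishes. Because $A\in\U_1$, its $n$-th row and $n$-th column are zero, so $\phi A$ reduces to the $(n-1)$-st row of $A$; the parabolic condition $\phi A=\lambda\phi$ then says this row is zero except for entries $\lambda$ and $-\lambda$ in positions $n-1$ and $n$. But column $n$ of $A$ is zero, forcing $\lambda=0$, whence row $n-1$ is entirely zero and $A\in\W$. Combined with $\W\subseteq\U_1\cap\U_2$, this gives $\U_1\cap\U_2=\W$, so $\W$ is a nonunital intersection, i.e. $\W\in\Gamma$.

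The main obstacle is precisely the design of $\U_2$. The obvious candidate for cutting row $n-1$ out of the corner $\U_1$ --- namely deleting that row outright --- produces a \emph{nonunital} algebra, which is not allowed. The trick I expect to be essential is to replace the deletion of row $n-1$ by a unital parabolic constraint whose only ``compensating'' freedom lives in column $n$, exactly where the corner condition defining $\U_1$ annihilates it; the hyperplane $\{y_{n-1}=y_n\}$ is engineered so that its stabilizer has this property. Everything else in the proof is bookkeeping on row and column supports.
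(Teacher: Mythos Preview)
Your proof is correct. The main difference from the paper's argument lies in the choice of the second unital subalgebra: both you and the paper take $\U_1=\M[R_n,C_n]$, but where you take $\U_2$ to be the parabolic stabilizer of the hyperplane $\{y_{n-1}=y_n\}$, the paper instead uses a \emph{conjugate corner algebra} $A^{-1}\M[R_n,C_n]A$ with $A=I+E_{n,n-1}$. The paper observes that conjugating $M\in\M[R_n,C_n]$ by $A$ copies row $n-1$ into the (zero) row $n$, so membership of $AMA^{-1}$ in $\M[R_n,C_n]$ forces row $n-1$ to vanish; together with $A^{-1}\W A=\W$ this yields $\W=\M[R_n,C_n]\cap A^{-1}\M[R_n,C_n]A$. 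Your route is a bit cleaner in that the unitality of $\U_2$ is immediate ($I\in\U_2$) and the intersection computation is a one-line covector calculation; the paper's route has the feature that it exhibits $\W$ as the intersection of two \emph{conjugate} copies of $\M_{n-1}$, neither of which contains $I$, which is a slightly sharper statement about how nonunital intersections can arise. The nonunitality argument via $E_{1,n-1}$ and the dimension count are essentially the same in both proofs (and your explicit verification that $\W$ is multiplicatively closed is harmless but redundant once $\W$ is written as an intersection of subalgebras).
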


\begin{proof}
For $n>1$, define $A \in \M$
by $A=I + E_{n,n-1} \ $.
Note that $A^{-1} = I - E_{n,n-1} \ $.
A straightforward computation shows that for $M \in \M[R_n, C_n]$,
the conjugate $AMA^{-1}$ is obtained from $M$ by replacing the (zero)
bottom row of $M$ by the $(n-1)$-th row of $M$.  Since 
the bottom two rows of $AMA^{-1}$ are identical, it follows that
\[
AMA^{-1} \in \M[R_n, C_n] \cap A\M[R_n, C_n]A^{-1}
\mbox{  if and only if  } AMA^{-1} \in \W.
\]
Since $\W=A^{-1}\W A$, this shows that $\W$ is the intersection
of the unital subalgebras $A^{-1}\M[R_n, C_n]A$ and $\M[R_n, C_n]$.
To see that $\W$ is nonunital, note that $E_{1,n-1}$ is a nonzero
matrix in $\W$ for which $E_{1,n-1}W$ is the zero matrix for each
$W \in \W$;   thus $\W$ cannot have a right identity, so 
$\W \in \Gamma$.
Finally, it follows from the definition of $\W$ that
$\ \dim \W = (n-1)(n-2)$.
\end{proof}

\noindent {\em Remark: }
The same proof shows that $\W \in \Gamma$ holds when the field $K$
is replaced by an arbitrary ring $R$ with $1 \ne 0$. 
If moreover $R$ happens to be
commutative, then the dimension of the algebra $\W$ over $R$ 
is well-defined
\cite[p. 483]{Rotman} and it equals $(n-1)(n-2)$.

%Lemma 2.2%
\begin{lem}
For any nonunital subalgebra $\L \subset \M$, $\ \dim \L \le n(n-1)$.
\end{lem}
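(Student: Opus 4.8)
The plan is to exploit the fact that a nonunital $\L$ sits as a codimension-one ideal inside the unital algebra it generates together with $I$, and to extract from this a character whose appearance in the natural module $K^n$ forces $\L$ to collapse a one-dimensional ``gap'' of $K^n$. First I would record that $I \notin \L$: otherwise $I$ would be a two-sided identity and $\L$ would be unital. Hence $\mathcal H := \L + KI$ properly contains $\L$, and since $\L \mathcal H = \L^2 + \L \subseteq \L$ and $\mathcal H \L \subseteq \L$, the subspace $\L$ is a two-sided ideal of $\mathcal H$ of codimension one. The quotient $\mathcal H/\L$ is then a one-dimensional unital algebra, so the projection is an algebra homomorphism $\phi : \mathcal H \to K$ with $\phi(I) = 1$ and $\ker \phi = \L$.

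Next I would work with the natural module $K^n$, on which $\mathcal H \subseteq \M$ acts faithfully (a matrix acting as $0$ is $0$). Let $K_\phi$ denote the one-dimensional $\mathcal H$-module on which $\mathcal H$ acts through $\phi$; it is simple, with annihilator exactly $\ker \phi = \L$. The key step is that, because $K^n$ is a faithful module over the finite-dimensional unital algebra $\mathcal H$, every simple $\mathcal H$-module, in particular $K_\phi$, occurs as a composition factor of $K^n$. (Otherwise the primitive central idempotent of the block of $K_\phi$ would act as $0$ on every composition factor of $K^n$, hence nilpotently on $K^n$; being idempotent it would then annihilate $K^n$, contradicting faithfulness.) Fixing a composition series of $K^n$ as $\mathcal H$-modules, I obtain $\mathcal H$-submodules $V' \subset V'' \subseteq K^n$ with $\dim V'' = \dim V' + 1 =: k$ on whose quotient $\mathcal H$ acts by $\phi$; since $\L = \ker \phi$ annihilates $V''/V'$, this gives $\L V'' \subseteq V'$.

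Finally I would convert this collapse into the dimension bound. The inclusion $\L V'' \subseteq V'$ means $\L \subseteq \{ X \in \M : X V'' \subseteq V' \}$, and a matrix so constrained is free on a complement of $V''$ while mapping the $k$-dimensional $V''$ into the $(k-1)$-dimensional $V'$, so this space has dimension $k(k-1) + (n-k)n = n^2 - k(n-k+1)$. Since $g(k) = k(n-k+1)$ is concave on $[1,n]$ with $g(1) = g(n) = n$, it satisfies $g(k) \ge n$ for every integer $1 \le k \le n$, whence $\dim \L \le n^2 - n = n(n-1)$.

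The main obstacle is the representation-theoretic input of the second paragraph, namely that the character $\phi$ cut out by the codimension-one ideal is actually realized as a composition factor of $K^n$ (equivalently, that the collapsing pair $V' \subset V''$ exists); the ideal-and-character setup and the elementary inequality $g(k) \ge n$ are routine once this is in hand. I also expect the two extreme values $k = 1$ (where $V''$ is a line killed by $\L$, i.e.\ a common kernel vector) and $k = n$ (where $\L K^n = V'$ is a hyperplane) to be precisely the configurations producing equality, which should be exactly what drives the classification of the extremal cases into $\M[C_n]$ and $\M[R_n]$ in the following lemma.
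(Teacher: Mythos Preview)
Your argument is correct and takes a genuinely different route from the paper. The paper's proof is a two-line reduction: it observes that $\L + KI$ is a \emph{proper} unital subalgebra of $\M$ (properness holds because otherwise $\L$ would be a nonzero proper two-sided ideal of the simple ring $\M$), and then invokes Agore's bound $\dim(\L+KI) \le n^2 - n + 1$ directly. Your proof instead bypasses Agore entirely: from the codimension-one ideal $\L$ in $\mathcal H$ you extract the character $\phi$, locate $K_\phi$ among the composition factors of the faithful $\mathcal H$-module $K^n$, and convert the resulting flag $V' \subset V''$ into the linear constraint $\L V'' \subseteq V'$, whose codimension you bound from below by the concavity of $k \mapsto k(n-k+1)$. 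This is more work but is self-contained, and it yields structural information (the collapsing pair $V' \subset V''$) that, as you anticipate, is exactly what underlies the classification in the next lemma.

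One small caveat on the step you yourself flag as the main obstacle: your justification that $K_\phi$ appears as a composition factor of $K^n$ is phrased via ``the primitive central idempotent of the block of $K_\phi$'', but a block of $\mathcal H$ may contain several non-isomorphic simples, and if another simple in that block occurs in $K^n$ then the block idempotent will not act nilpotently. The fix is standard: use instead an idempotent lift to $\mathcal H$ of the central idempotent in $\mathcal H/J(\mathcal H)$ corresponding to the Wedderburn factor of $K_\phi$; this idempotent acts as $1$ on $K_\phi$ and as $0$ on every other simple, and your nilpotence-plus-idempotence argument then goes through verbatim.
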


\begin{proof}
It cannot happen that $\L +KI =\M$, otherwise $\L$ would be a two-sided
proper ideal of $\M$, contradicting the fact that $\M$ is a simple ring
\cite[p. 280]{Rotman}.
Since $\L +KI$ is a proper subalgebra of $\M$ containing the unity $I$,
it follows from Agore \cite[Cor. 2.6]{Agore} that
$\ \dim \L = -1 + \ \dim (\L + KI) \le n(n-1)$.
\end{proof}

%Lemma 2.3%
\begin{lem}
Any nonunital subalgebra $\L \subset \M$ with $\ \dim \L = n(n-1)$
must be similar to either $\M[R_n]$ or $\M[C_n] = \M[R_n]^{\rm {T}}$.
\end{lem}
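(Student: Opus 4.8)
The plan is to start from a nonunital subalgebra $\L \subset \M$ of maximum dimension $n(n-1)$, invoke Lemma 2.2's machinery, and pin down $\L$ up to similarity.

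The plan is to determine the unital algebra $\U:=\L+KI$ first, and then recover $\L$ inside it. By the proof of Lemma 2.2, $\U$ is a proper unital subalgebra of $\M$ of the maximal possible dimension $n^2-n+1$. Since $\L$ is nonunital we have $I\notin\L$, so $\U=\L\oplus KI$ and $\L$ has codimension one in $\U$. It therefore suffices to (i) show that, up to similarity and transposition, $\U=\P=\M[R_n]+KE_{n,n}$, and (ii) identify the codimension-one subalgebra $\L$ inside $\P$.

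For (i) I would analyze the action of $\U$ on $V=K^n$. If $\U$ acted absolutely irreducibly, Burnside's theorem would force $\U=\mathrm{End}_K(V)=\M$, contradicting properness. If $\U$ acted irreducibly but not absolutely irreducibly, then $D:=\mathrm{End}_\U(V)$ is a division algebra with $\dim_K D\ge 2$, and $\U\subseteq\mathrm{End}_D(V)\cong\M_m(D)$ with $n=m\dim_K D$; a direct estimate gives $\dim_K\M_m(D)=m^2\dim_K D<n^2-n+1$, so $\U$ cannot attain the required dimension. Hence $\U$ is reducible and fixes a subspace $X$ with $0<\dim X=k<n$. In a basis adapted to $X$, $\U$ is contained in the block upper triangular algebra of type $(k,n-k)$, of dimension $n^2-k(n-k)$; the maximality $n^2-n+1\le n^2-k(n-k)$ forces $k(n-k)\le n-1$, hence $k\in\{1,n-1\}$, and then $\U$ equals that block triangular algebra. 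Transposition interchanges the two cases (an invariant line for $\L$ becomes an invariant hyperplane for $\L^{\mathrm T}$), and $\M[C_n]=\M[R_n]^{\mathrm T}$, so I may assume $k=n-1$, i.e. $\U=\P$, and it remains to prove $\L=\M[R_n]$.

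For (ii), writing each $u\in\U$ uniquely as $u=\ell+\phi(u)I$ with $\ell\in\L$ defines a linear functional $\phi\colon\U\to K$ with $\ker\phi=\L$ and $\phi(I)=1$. A short computation, using that $\L$ is multiplicatively closed and $\phi(I)=1$, collapses the cross terms and yields $\phi(xy)=\phi(x)\phi(y)$; thus $\phi$ is a unital algebra homomorphism and $\L=\ker\phi$ is a two-sided ideal with $\U/\L\cong K$. Any such $\phi$ kills the Jacobson radical of $\P$ (spanned by $E_{1,n},\dots,E_{n-1,n}$) and so factors through $\P/\mathrm{rad}\,\P\cong\M_{n-1}(K)\oplus K$. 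For $n\ge 3$ the simple algebra $\M_{n-1}(K)$ has dimension $(n-1)^2\ge 4$ and admits no nonzero homomorphism into the field $K$, so the only unital homomorphism $\P\to K$ is the $(n,n)$-entry functional, whose kernel is exactly $\M[R_n]$. This gives $\L=\M[R_n]$, and transposing gives $\L=\M[C_n]$ in the remaining case.

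I expect the crux to be step (i), and within it the need to exclude a noncommutative commutant so that Burnside's theorem applies over the possibly non-closed field $K$; the dimension estimate is precisely what rules this out, and it is there that the hypothesis $\dim\L=n(n-1)$ enters most essentially. Alternatively, one could invoke Agore's classification of maximal-dimensional proper unital subalgebras over $\overline{K}$ and then descend, but the Burnside-and-dimension argument has the advantage of working directly over $K$.
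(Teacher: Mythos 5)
Your argument is correct in substance but follows a genuinely different route from the paper's in both halves. For the identification of $\U=\L+KI$ up to similarity, the paper simply cites Agore's classification of maximal proper unital subalgebras and then invokes Wallach's Appendix to descend from $\overline{K}$ to $K$; you instead reprove that classification directly over $K$ via Burnside/double-centralizer plus the dimension count $m^2\dim_K D<n^2-n+1$ ruling out a nontrivial commutant, and a block-triangular dimension count forcing $k\in\{1,n-1\}$. This is self-contained and sidesteps the algebraic-closure issue entirely, at the cost of more machinery. For recovering $\L$ inside $\P$, the paper argues with bare hands: $\L$ is a two-sided ideal of $\P=\L+KI$, so from a single matrix $L\in\L$ with a nonzero entry in the upper-left $(n-1)\times(n-1)$ corner one manufactures $E_{i,j}=E_{i,i}LE_{j,j}\in\L$ and then all of $\M[R_n]$ (with a separate small case, which only occurs for $n=2$, when no such entry exists). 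You instead observe that $u\mapsto\phi(u)$ is a unital character of $\P$ with kernel $\L$, kill the radical, and classify characters of $\P/\mathrm{rad}\,\P\cong\M_{n-1}(K)\oplus K$; this is cleaner and more structural. One small point to patch: your claim that the $(n,n)$-entry functional is the \emph{only} character of $\P$ is false for $n=2$ (there $\M_{n-1}(K)=K$ and a second character exists, with kernel $\M_2[C_1]$), and the lemma is stated for all $n$ and is actually applied with $n-1=2$ in Theorem 3.2; the conclusion still holds since $\M_2[C_1]$ is similar to $\M_2[C_2]=\M[R_2]^{\rm T}$, but you should say so explicitly rather than exclude $n=2$.
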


\begin{proof}
Consider the two parabolic subalgebras $\P, \P' \subset \M$
defined by
\[
\P = \P_K = \M[R_n] + K E_{n,n} \ ,  \quad \P' = \P'_K = \M[C_1] + K E_{1,1} \ .
\]
Note that $\P'$ is similar to the transpose $\P^{\rm {T}}$.
Since $\L +KI$ is a proper subalgebra of $\M$ of dimension $n(n-1)+1$,
it follows from  Agore \cite[Prop. 2.5]{Agore} that $\L +KI$ is similar to
$\P$ or $\P'$, under the condition that $K$ is algebraically closed.
However, Nolan Wallach \cite{Wallach} has proved that this condition
can be dropped; see the Appendix.   Thus, replacing $\L$ by a conjugate if
necessary, we may assume that $\L +KI =\P$ or $\L +KI =\P^{\rm {T}}$.
We will assume that $\L +KI =\P$, since the proof for $\P^{\rm {T}}$
is essentially the same.  It suffices to show that $\L$ is similar
to $\M[R_n]$ or $\M[C_1]$, since $\M[C_1]$ is similar to $\M[C_n]$.

Assume temporarily
that each $L \in \L$ has all entries 0 in its upper left $(n-1) \times
(n-1)$ corner.   Then $n=2$, because if $n \ge 3$, then every matrix
in $\P$ would have a zero entry in row 1, column 2, contradicting
the definition of $\P$. Since $\L \subset \M_2[C_1]$ and both sides
have dimension 2, we have $\L = \M_2[C_1]$, which proves the theorem
under our temporary assumption.

When the temporary assumption is false, there exists
$L \in \L$ with the entry 1 in row $i$, column $j$ for some fixed pair
$i,j$ with $1 \le i,j \le n-1$.
Since $E_{i,i}$ and $E_{j,j}$ are in $\P = \L +KI$ and $\L$ is a 
two-sided ideal of $\P$, we have $E_{i,j} =E_{i,i} L E_{j,j} \in \L$.
Consequently, $E_{a,b} =E_{a,i} E_{i,j} E_{j,b} \in \L$
for all pairs $a,b$ with $1 \le a \le n-1$ and $1 \le b \le n$.
Therefore
\[
\M[R_n] = \sum_{a=1}^{n-1} \sum_{b=1}^{n} K E_{a,b} \subset \L,
\]
and since both $\M[R_n]$ and $\L$ have the same dimension $n(n-1)$,
we conclude that $\L = \M[R_n]$.
\end{proof}

\noindent {\em Remark: }
Any subalgebra $\B \subset \M$ properly containing $\M[R_n]$
must also contain $I$.  To see this, note that $\B$ contains
a nonzero matrix of the form
\[
B:= \sum_{i=1}^{n} c_i E_{n,i} \ , \quad  c_i \in K.
\]
If $c_j = 0$ for all $j < n$, then $E_{n,n} \in \B$, so $I \in \B$.
On the other hand, if $c_j \ne 0$ for some $j < n$, then
$E_{n,n}=c_j^{-1 }B E_{j,n} \in \B$, so again $I \in \B$.

%Lemma 2.4%
\begin{lem}
Suppose that a subalgebra  $\U \subset \M$ has a unity $e \ne I$.
Then $S^{-1} \U S \subset \M[R_n,C_n]$ for some invertible $S \in \M$.
\end{lem}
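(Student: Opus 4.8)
The plan is to exploit the fact that the unity $e$ of $\U$ is an idempotent and to conjugate it into a standard diagonal projection. Since $e$ is the unity of $\U$, we have $eu = ue = u$ for every $u \in \U$; in particular, taking $u = e$ gives $e^2 = e$, so $e$ is idempotent. Its minimal polynomial therefore divides $x^2 - x = x(x-1)$, which has the distinct roots $0$ and $1$ in $K$. Hence $e$ is diagonalizable with all eigenvalues in $\{0,1\}$, and there is an invertible $S \in \M$ with $S^{-1} e S = e'$, where $e' = E_{1,1} + \cdots + E_{r,r}$ and $r$ is the rank of $e$. Because $e \ne I$, not every eigenvalue of $e$ equals $1$, so $r \le n-1$.

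Now set $\U' = S^{-1} \U S$. Conjugating the relation $eu = ue = u$ by $S$ shows that the unity of $\U'$ is $e'$, so $e' u' = u' e' = u'$, equivalently $u' = e' u' e'$, for every $u' \in \U'$. A direct check shows that $e' M e'$ has all entries zero outside the upper-left $r \times r$ block, for any $M \in \M$. In particular, since $r \le n-1$, every $u' \in \U'$ has zero $n$-th row and zero $n$-th column, so $S^{-1} \U S = \U' \subset \M[R_n,C_n]$, which is the desired conclusion.

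The only point requiring any care is the diagonalization of $e$, but this is immediate from $e^2 = e$: the separability of $x(x-1)$ guarantees that $e$ is diagonalizable over an arbitrary field, so the argument does not even use the hypothesis that $K$ has characteristic zero. Everything else reduces to the routine observation that sandwiching by the coordinate projection $e'$ kills all rows and columns beyond the $r$-th. I expect no substantive obstacle here; the lemma is essentially the statement that a subalgebra with a nontrivial idempotent unity can be compressed into a proper coordinate block.
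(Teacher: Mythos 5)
Your proposal is correct and follows essentially the same route as the paper: conjugate the idempotent unity $e$ to the diagonal projection of rank $r \le n-1$, then use $\U = e\,\U e$ to compress everything into the upper-left block inside $\M[R_n,C_n]$. The only cosmetic difference is that you justify the diagonalization via the minimal polynomial dividing $x(x-1)$, whereas the paper cites a reference for the canonical form of an idempotent.
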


\begin{proof}
Let $r$ be the rank of the matrix $e$.
Note that $e$ is idempotent, so by \cite[p. 27]{OCV},
there exists an invertible $S \in \M$ for which $S^{-1} e S = D_r$,
where $D_r$ is a diagonal matrix with entries 1 in rows 1 through $r$,
and entries 0 elsewhere.  Replacing $\U$ by
$S^{-1} \U S$ if necessary, we may
assume that $e=D_r$.  Since $r \le n-1$, we have
\[
\U = e \, \U e \subset e \M e = D_r \M D_r \subset D_{n-1}\M D_{n-1} 
= \M[R_n, C_n].
\]
\end{proof}

\section{Theorems}

Recall that $\Gamma$ is the set of all nonunital intersections in
$\M$.

%Theorem 3.1%
\begin{thm}
If $\N \in \Gamma$, then $\ \dim \N \le (n-1)(n-2)$.
\end{thm}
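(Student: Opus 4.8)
The plan is to reduce the bound to a single application of Lemma 2.2 inside a matrix algebra one size smaller. Write $\N = \A \cap \B$, where $\A, \B$ are unital subalgebras of $\M$ and $\N$ is nonunital (this is what $\N \in \Gamma$ means). The first observation I would make is that $\A$ and $\B$ cannot both have unity $I$: if they did, then $I \in \A \cap \B = \N$, and since $I$ is a two-sided identity for all of $\M$ it would in particular be a unity for $\N$, contradicting the nonunitality of $\N$. Hence at least one of the two, say $\A$, has a unity $e \ne I$.

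Next I would invoke Lemma 2.4. Since $\A$ has unity $e \ne I$, there is an invertible $S \in \M$ with $S^{-1}\A S \subseteq \M[R_n,C_n]$. Because $\Gamma$ is closed under conjugation and $\dim$ is conjugation-invariant, I may replace $\N, \A, \B$ by their conjugates by $S$; indeed $S^{-1}\N S = (S^{-1}\A S)\cap(S^{-1}\B S)$ is again a nonunital intersection of the same dimension. Thus I may assume outright that $\A \subseteq \M[R_n,C_n]$, and consequently that $\N = \A \cap \B \subseteq \A \subseteq \M[R_n,C_n]$.

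To finish, I would exploit the fact that $\M[R_n,C_n]$, consisting of the matrices supported on the upper-left $(n-1)\times(n-1)$ block, is isomorphic as an algebra to $\M_{n-1}(K)$. Since the presence or absence of a unity is intrinsic to $\N$ as an abstract algebra, $\N$ is a nonunital subalgebra of this copy of $\M_{n-1}(K)$. Applying Lemma 2.2 with $n$ replaced by $n-1$ then gives $\dim \N \le (n-1)\bigl((n-1)-1\bigr) = (n-1)(n-2)$, which is the asserted bound.

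The single substantive step is the passage to $\M_{n-1}(K)$ via Lemma 2.4; once $\A$ is pinned inside $\M[R_n,C_n]$, the second subalgebra $\B$ plays no further role and the estimate follows immediately by re-using Lemma 2.2 one dimension down. The points I expect to require the most care—and which constitute the main, if modest, obstacle—are verifying that at least one of $\A,\B$ fails to have $I$ as its unity, and confirming that nonunitality is preserved under the identification $\M[R_n,C_n]\cong\M_{n-1}(K)$, so that the hypothesis of Lemma 2.2 genuinely transfers to the smaller algebra.
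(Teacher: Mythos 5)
Your proposal is correct and follows essentially the same route as the paper's own proof: identify the factor whose unity is not $I$, conjugate it into $\M[R_n,C_n]$ via Lemma 2.4, and apply Lemma 2.2 with $n-1$ in place of $n$ to the nonunital subalgebra $\N$ of that copy of $\M_{n-1}(K)$. The two points you flag as needing care are exactly the ones the paper handles, in the same way.
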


\begin{proof}
Let $\N \in \Gamma$, so that $\N = \U \cap \V$
for some pair of unital subalgebras $\U, \V \subset \M$.
Since $\N$ is nonunital, one of $\U, \V$, say $\U$, does not
contain $I$.  Thus $\U$ contains a unity $e \ne I$.  
Define $S$ as in Lemma 2.4.
Replacing $\U$, $\V$, $\N$ by 
$S^{-1} \U S$, $S^{-1} \V S$, $S^{-1} \N S$, if necessary, we 
deduce from Lemma 2.4 that $\U$ is contained in $\M[R_n,C_n]$.
Since $\N$ is a nonunital subalgebra of $\U \subset \M[R_n, C_n]$,
it follows from Lemma 2.2 with $(n-1)$ in place of $n$ that
$\ \dim \N \le (n-1)(n-2)$.
\end{proof}

%Theorem 3.2%
\begin{thm}
Let $n \ge 3$.  Then up to similarity, $\W$ and $\W^{\rm {T}}$ are the 
only subalgebras of $\M$ in $\Gamma$ having dimension $(n-1)(n-2)$.
\end{thm}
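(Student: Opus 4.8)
The plan is to prove both halves of the statement separately. One half is immediate from the lemmas: Lemma 2.1 shows $\W \in \Gamma$ with $\dim \W = (n-1)(n-2)$, and since $\Gamma$ is closed under transposition, $\W^{\rm T} \in \Gamma$ as well, with the same dimension. Thus $\W$ and $\W^{\rm T}$ are genuine examples of nonunital intersections of maximal dimension, and the content of the theorem is the converse: every $\N \in \Gamma$ with $\dim \N = (n-1)(n-2)$ is similar to one of them.

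For the converse I would reuse the reduction already carried out in the proof of Theorem 3.1. Given $\N = \U \cap \V \in \Gamma$ of maximal dimension, one factor, say $\U$, fails to contain $I$ and so has a unity $e \ne I$; by Lemma 2.4, after replacing $\N,\U,\V$ by a common conjugate, we may assume $\U \subset \M[R_n,C_n]$, whence $\N \subset \M[R_n,C_n]$. Identifying $\M[R_n,C_n]$ with $\M_{n-1}$ through its upper-left $(n-1)\times(n-1)$ block, $\N$ becomes a nonunital subalgebra of $\M_{n-1}$ of dimension $(n-1)(n-2) = (n-1)\bigl((n-1)-1\bigr)$, which is exactly the maximal value allowed by Lemma 2.2 with $n-1$ in place of $n$. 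Lemma 2.3, again applied with $n-1$ in place of $n$, then forces $\N$ to be similar within $\M_{n-1}$ to either $\M_{n-1}[R_{n-1}]$ or $\M_{n-1}[C_{n-1}]$.

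The remaining step is to transport this $\M_{n-1}$-similarity back to a similarity inside $\M_n$, and this is the part that needs the most care. A conjugation of $\M_{n-1}$ by an invertible $T \in \M_{n-1}$ extends to conjugation of $\M_n$ by the block-diagonal matrix $T \oplus 1$, which is invertible and restricts to the given conjugation on the block $\M[R_n,C_n]$ while leaving the ambient zero entries of $\N$ untouched; so the $\M_{n-1}$-similarity upgrades to an $\M_n$-similarity. Under the block identification one checks directly that $\M_{n-1}[R_{n-1}]$ corresponds to $\M[R_n,R_{n-1},C_n] = \W$ and $\M_{n-1}[C_{n-1}]$ corresponds to $\M[R_n,C_{n-1},C_n] = \W^{\rm T}$. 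Composing the extended conjugation with the earlier reduction conjugation from Theorem 3.1, and using transitivity of similarity, yields that $\N$ is similar to $\W$ or $\W^{\rm T}$, completing the proof. I expect essentially all the real work to reside in Lemmas 2.2, 2.3, and 2.4; the only genuinely delicate bookkeeping is this final matching of the two distinguished subalgebras and the verification that the conjugating matrix produced inside $\M_{n-1}$ extends to an invertible element of $\M_n$ implementing a similarity of the original $\N$.
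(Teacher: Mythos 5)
Your proposal is correct and follows essentially the same route as the paper: reduce to $\N \subset \M[R_n,C_n]$ via the Theorem 3.1 argument, pass to the upper-left $(n-1)\times(n-1)$ block, and invoke Lemma 2.3 with $n-1$ in place of $n$. Your explicit verification that the conjugating matrix $T \in \M_{n-1}$ extends to $T \oplus 1 \in \M_n$ is a detail the paper leaves implicit, but it is the same argument.
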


\begin{proof}
By Lemma 2.1, every subalgebra of $\M$ similar to $\W$ or $\W^{\rm {T}}$
lies in $\Gamma$ and has dimension $(n-1)(n-2)$.
Conversely, let $\N \in \Gamma$ with $\dim \ \N = (n-1)(n-2)$.
We must show that $\N$ is similar to $\W$ or $\W^{\rm {T}}$.

We may assume, as in the proof of Theorem 3.1, that
$\N$ is a nonunital subalgebra of $\M[R_n, C_n]$.
Let $\L$ be the subalgebra of $\M_{n-1}$ consisting of those
matrices in the upper left
$(n-1) \times (n-1)$ corners of the matrices in $\N$.
Since $\dim \ \L = \dim \ \N = (n-1)(n-2)$, it follows from
Lemma 2.3 that $\L$ is similar to $\M_{n-1}[R_{n-1}]$ or
$\M_{n-1}[C_{n-1}]$.   
Thus $\N$ is similar to
$\W = \M[R_n, R_{n-1}, C_n]$ 
or $\W^{\rm {T}} = \M[R_n, C_{n-1}, C_n]$.
\end{proof}

Recall that  $\Omega$ denotes the set of proper subalgebras 
$\B \ne \M[R_n]$ in $\P$.

%Theorem 3.3%
\begin{thm}
Let $\B \in \Omega$.
Then $\dim \B \le n^2 -2n +3$.
If $\B$ has maximum dimension $n^2 -2n +3$, then $\B$ is similar to one of
\[
\M E_{n,n} +  \M[R_n,C_1] + K E_{1,1}, \quad \quad 
\M E_{n,n} +  \M[R_n,R_{n-1}] + K E_{n-1,n-1}.
\]
\end{thm}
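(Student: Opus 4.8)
The plan is to exploit the fact that $\M[R_n]$ is a two-sided ideal of $\P$ of codimension $1$. Indeed, the map $\phi:\P\to K$ sending a matrix to its $(n,n)$-entry is an algebra homomorphism with kernel $\M[R_n]$, since $(PP')_{n,n}=P_{n,n}P'_{n,n}$ for $P,P'\in\P$. Given $\B\in\Omega$, I would first record the structural fact that $\B$ cannot properly contain $\M[R_n]$: if it did, the Remark following Lemma 2.3 would give $I\in\B$, whence $\B\supseteq\M[R_n]+KI=\P$ and so $\B=\P$, contradicting properness. Consequently either $\B\subseteq\M[R_n]$, and then $\B$ is a \emph{proper} subalgebra of $\M[R_n]$ because $\B\ne\M[R_n]$; or $\phi(\B)=K$, in which case $\A:=\B\cap\M[R_n]=\ker(\phi|_{\B})$ has codimension $1$ in $\B$ and is a proper subalgebra of $\M[R_n]$ (proper, again by the structural fact). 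In both cases everything reduces to one key estimate, which I would isolate and prove first: \emph{every proper subalgebra of $\M[R_n]$ has dimension at most $n^2-2n+2$}, with equality exactly for the subalgebras identified below.

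To prove this estimate I would use the internal structure $\M[R_n]\cong\M_{n-1}\ltimes\V$, where $\V:=\sum_{i=1}^{n-1}KE_{i,n}$ is the space of matrices supported above the diagonal in the last column. Here $\V$ is a two-sided ideal with $\V^2=0$, on which $\M_{n-1}$ (the upper-left $(n-1)\times(n-1)$ block) acts on the left as the natural module and on the right by zero. For a proper subalgebra $\A\subseteq\M[R_n]$, let $\overline{\A}$ be its image under the projection $\pi:\M[R_n]\to\M_{n-1}$ and put $W:=\A\cap\V$, so that $\dim\A=\dim\overline{\A}+\dim W$ and $W$ is an $\overline{\A}$-submodule of $\V$. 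Since $\V$ is irreducible, if $\overline{\A}=\M_{n-1}$ then $W\in\{0,\V\}$; the choice $W=\V$ forces $\A=\M[R_n]$, so for proper $\A$ we get $\dim\A=(n-1)^2=n^2-2n+1$. If instead $\overline{\A}\subsetneq\M_{n-1}$, then Lemmas 2.2 and 2.4 together with Agore's Corollary 2.6 show that a proper subalgebra of $\M_{n-1}$ has dimension at most $(n-1)^2-(n-1)+1=n^2-3n+3$; combined with $\dim W\le n-1$ this yields $\dim\A\le n^2-2n+2$. Tracking equality, $\dim\A=n^2-2n+2$ forces $W=\V$ and $\dim\overline{\A}=n^2-3n+3$; such an $\overline{\A}$ must contain $I_{n-1}$ (the non-unital alternatives give strictly smaller dimension), so by Agore's Proposition 2.5 with Wallach's extension to general $K$ (as used in Lemma 2.3), $\overline{\A}$ is similar to the analogue $\P_{n-1}$ or $\P'_{n-1}$ in $\M_{n-1}$, and $\A=\overline{\A}\ltimes\V$.

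The dimension bound for $\B$ now follows immediately: when $\B\subseteq\M[R_n]$ we get $\dim\B\le n^2-2n+2$, while when $\phi(\B)=K$ we get $\dim\B=\dim\A+1\le n^2-2n+3$. In particular any $\B$ of maximum dimension $n^2-2n+3$ lies in the second case and has $\A=\B\cap\M[R_n]$ extremal, so $\A=\overline{\A}\ltimes\V$ with $\overline{\A}$ similar to $\P_{n-1}$ or $\P'_{n-1}$.

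It remains to reconstruct $\B$ from $\A$, which I expect to be the main obstacle but which should collapse precisely because a maximal parabolic contains the identity $I_{n-1}$. Conjugating by a block-diagonal matrix $\mathrm{diag}(S_0,1)$, which preserves $\P$, $\M[R_n]$ and $\V$, I would normalize $\overline{\A}$ to be exactly $\P_{n-1}$ or $\P'_{n-1}$, so that $\A=\overline{\A}\ltimes\V$. Writing $\B=\A\oplus Kb$, I may scale $b$ so that $b_{n,n}=1$ and, after subtracting a suitable element of $\V\subseteq\A$, arrange that the last column of $b$ above the diagonal vanishes; thus $b=\mathrm{diag}(X_b,1)$ in block form. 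For each $a\in\A$ the product $ba$ lies in $\B\cap\M[R_n]=\A$, and applying $\pi$ gives $X_b\overline{\A}\subseteq\overline{\A}$; evaluating at $I_{n-1}\in\overline{\A}$ forces $X_b\in\overline{\A}$. Hence $b-\mathrm{diag}(X_b,0)=E_{n,n}\in\B$, so $\B=(\overline{\A}\ltimes\V)+KE_{n,n}$. A direct check then shows this equals $\M E_{n,n}+\M[R_n,C_1]+KE_{1,1}$ when $\overline{\A}=\P'_{n-1}$, and $\M E_{n,n}+\M[R_n,R_{n-1}]+KE_{n-1,n-1}$ when $\overline{\A}=\P_{n-1}$; verifying conversely that these two candidates are proper subalgebras of $\P$ distinct from $\M[R_n]$ of dimension $n^2-2n+3$ completes the classification.
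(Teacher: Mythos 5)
Your proof is correct. It rests on the same external inputs as the paper's --- the trichotomy (Lemma 2.2, Lemma 2.4, Agore's Corollary 2.6) to bound proper subalgebras of $\M_{n-1}$ by $(n-1)(n-2)+1$, and Agore's Proposition 2.5 with Wallach's descent for the extremal case --- but the decomposition is genuinely different. The paper multiplies on the right by the idempotent $e=I-E_{n,n}$, bounds $\dim\B\le\dim\B e+n$ in one step, and handles the case $\B e=\M[R_n,C_n]$ by an ad hoc computation showing $\C\cap\D=0$ for $\C=\B+KE_{n,n}$ and $\D=\M[R_n]E_{n,n}$. You instead filter $\P\supset\M[R_n]\supset\V$, tracking separately the $(n,n)$-entry, the projection $\overline{\A}$ to $\M_{n-1}$, and $W=\A\cap\V$; the troublesome case $\overline{\A}=\M_{n-1}$ is then dispatched by irreducibility of the natural module, and the bound $\dim\A\le\dim\overline{\A}+\dim W$ records exactly which part of the kernel actually lies in the subalgebra rather than adding the full $n$ for $\M E_{n,n}$. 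Your equality analysis is also more explicit than the paper's: you derive $E_{n,n}\in\B$ from $X_b\overline{\A}\subseteq\overline{\A}$ together with $I_{n-1}\in\overline{\A}$, and reconstruct $\B=(\overline{\A}\ltimes\V)+KE_{n,n}$, whereas the paper simply asserts $\B=\B e+\M E_{n,n}$ in the extremal case. The cost is length; the gain is that each inequality is attributed to a specific layer of the filtration and the two extremal algebras emerge with their semidirect-product structure visible.
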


\begin{proof}
Let $e \in \M$ denote the diagonal matrix of rank $n-1$ with 
entry $0$ in row $n$ and entries $1$ in the remaining rows.
Because $e$ is a left identity in $\M[R_n]$
and $\B e \subset \M[R_n,C_n]$, it follows that $\B e$ is an algebra.

First suppose that $\B e = \M[R_n,C_n]$.   Then $\P = \C + \D$, where
\[
\C = \B + K E_{n,n}, \quad \D = \M[R_n] E_{n,n}.
\]
We proceed to show that
$\C \cap \D$ is zero.
Assume for the purpose of contradiction that there exists a nonzero matrix
$B \in \C \cap \D$.   Then $B \in \B$.   We have
$\B B  = \D$, since the matrices in $\B$ have all possible
submatrices in their upper left $(n-1)$ by $(n-1)$ corners.
Thus $\D \subset \B \subset \C$, 
which implies that $\M[R_n] \subset \B$ and
$\P = \C = \B + K E_{n,n}$.
If $K E_{n,n} \subset \B$, then $\B = \P$, 
and if $K E_{n,n}$ is not contained in $\B$, then
$\B = \M[R_n]$; either case contradicts the fact that
$\B \in \Omega$. 

Since $\C \cap \D$ is zero, 
\[
\dim \B \le \dim \C = \dim \P - \dim \D = (n^2-n+1)-(n-1) = n^2-2n+2.
\]
Thus 
\[
\dim \B < n^2 - 2n +3,
\]
so the desired upper bound for $\dim \B$  holds 
when $\B e = \M[R_n,C_n]$.

Next suppose that $\B e$ is a proper subalgebra of $\M[R_n,C_n]$.
We proceed to show that
\[
d:= \dim \B e \le (n-1)(n-2) +1,
\]
by showing that
\[
\dim \L \le (n-1)(n-2) +1 
\]
for every proper subalgebra $\L$
of $\M_{n-1}$. If $\L$ is nonunital,
then 
\[
\dim \L \le (n-1)(n-2) < (n-1)(n-2)+1 
\]
by Lemma 2.2 (with $n-1$ in place of $n$).
If $\L$ contains a unit different from the identity of $\M_{n-1}$, then by
Lemma 2.4 (with $\L$ in place of $\U$),
\[
\dim \L \le  \dim \M[R_{n-1}, C_{n-1}]=(n-2)^2 < (n-1)(n-2) +1.
\]
If $\L$ contains the identity of $\M_{n-1}$, then by Agore
\cite[Cor. 2.6]{Agore},
\[
\dim \L \le (n-1)(n-2) +1.
\]
This completes the demonstration that $d \le (n-1)(n-2)+1$.

Let $B_1e, B_2e, \dots, B_de$ be a basis for $\B e$, with $B_i \in \B$.
Since $\B$ is a subspace of the vector space spanned by the $d+n$
matrices
\[
B_1, \dots, B_d, E_{1,n}, \dots, E_{n,n},
\]
it follows that
\[
\dim \B \le d+n \le (n-1)(n-2)+1 +n = n^2 -2n +3.
\]
Thus the desired upper bound for $\dim \B$ holds in all cases.

The argument above shows that when we have the equality
\[
\dim \B = d+n = (n-1)(n-2)+1 +n = n^2 -2n +3,
\]
then
\[
\B = \B e + \M E_{n,n}.
\]
Moreover, from the equality $d = \dim \B e = (n-1)(n-2)+1$,
it follows from Agore \cite[Prop. 2.5]{Agore}
(again appealing to the Appendix to dispense with the condition
of algebraic closure) that 
there is an invertible matrix $S$ in the set $\M[R_n, C_n] + E_{n,n}$
such that $S^{-1} \B e S$ is equal to one of
\[
\M[R_n,C_n,C_1] + K E_{1,1}, \quad \quad
\M[R_n,C_n,R_{n-1}] + K E_{n-1,n-1}.
\]
Since $ S^{-1} \M E_{n,n} S = \M E_{n,n}$,
we achieve the desired classification of $\Omega$.
\end{proof}

\section{Appendix}

Let $F$ be a field of characteristic $0$ with 
algebraic closure $\overline{F}$.
Given a proper subalgebra $\C \subset \M_n(F)$ of maximum dimension,
Agore \cite[Prop. 2.5, Cor. 2.6]{Agore} proved that the $\bar{F}$-span
of $\C$ is similar over $\bar{F}$ to the $\bar{F}$-span of $\D$
for some parabolic subalgebra $\D$ of maximum dimension in $\M_n(F)$.
The purpose of this Appendix is to deduce that $\C$ is similar 
over $F$ to $\D$.

\begin{lem} {\rm (Wallach)}
Let $\A$ be a subspace of $\M_{n}(F)$ of dimension $n-1$ such that 
$\A\otimes _{F}\overline{F}$ has basis of one of the following two forms:

a) $x_{1}\otimes\lambda_{1},x_{2}\otimes\lambda_{1},...,x_{n-1}\otimes
\lambda_{1}$, with $\lambda_{1}\in(\bar{F}^{n})^{\ast},x_{j}\in\bar{F}^{n}$
and $\lambda_{1}(x_{j})=0$,

b) $x_{1}\otimes\lambda_{1},x_{1}\otimes\lambda_{2},...,x_{1}\otimes
\lambda_{n-1}$, with $\lambda_{j}\in(\bar{F}^{n})^{\ast},x_{1}\in\bar{F}^{n}$
and $\lambda_{j}(x_{1})=0.$
Then in case a) $\A$ is $F-$conjugate (i.e. under $\GL(n,F)$) to the
span of the matrices $E_{i,n}$ with $i=1,...,n-1$, and in case b) $\A$ is
$F-$conjugate to the span of the matrices $E_{n,i}$ with $i=1,...,n-1$.
\end{lem}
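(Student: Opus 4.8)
My plan is to reduce the whole statement to a single rationality fact about a one-dimensional space, and to treat only case a) directly; case b) then follows by transposition, since $(x\otimes\lambda)^{\rm T}=\lambda\otimes x$ interchanges the two normal forms, $F$-conjugacy is preserved under transposition, and $\langle E_{i,n}\rangle^{\rm T}=\langle E_{n,i}\rangle$.

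First I would record the geometry of case a). Each basis matrix $x_j\otimes\lambda_1$ has every row proportional to the single covector $\lambda_1$, so every matrix in $\A\otimes_F\overline F$ has its rows in $\langle\lambda_1\rangle$, and the $\overline F$-span of all rows of all matrices in $\A\otimes_F\overline F$ is exactly this one-dimensional space. The hard part, and the whole point of the lemma, is that although the factorization $x\otimes\lambda$ is only given over $\overline F$, the shared covector must already be defined over $F$. To see this, let $W\subseteq(F^n)^*$ be the $F$-span of all rows of all matrices in $\A$; choosing an $F$-basis $M_1,\dots,M_{n-1}$ of $\A$ and using that row spaces are unaffected by field extension, one gets $W\otimes_F\overline F=\langle\lambda_1\rangle$, whence $W$ is one-dimensional over $F$. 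Fixing $0\ne\mu\in(F^n)^*$ with $W=\langle\mu\rangle$, the covector $\lambda_1$ is an $\overline F$-multiple of $\mu$, so after rescaling I may take $\lambda_1=\mu$.

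With an $F$-rational $\mu$ available, every $M\in\A$ has all rows proportional to $\mu$, so $M=v_M\otimes\mu$ with $v_M\in F^n$; the assignment $M\mapsto v_M$ is an $F$-linear isomorphism of $\A$ onto a subspace $V\subseteq F^n$ of dimension $n-1$. The hypothesis $\lambda_1(x_j)=0$ becomes $\mu(v)=0$ for $v\in V$, so $V\subseteq\ker\mu$, and equality follows by comparing dimensions. Hence $\A=\{v\otimes\mu:v\in\ker\mu\}$, now entirely $F$-rational. To finish, I would choose an $F$-basis $u_1,\dots,u_{n-1}$ of $\ker\mu$, extend it by some $u_n$ with $\mu(u_n)\ne0$, and let $S\in\GL(n,F)$ be the matrix with columns $u_1,\dots,u_n$. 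Using $S^{-1}(v\otimes\mu)S=(S^{-1}v)\otimes(\mu\circ S)$, a direct check gives $\mu\circ S=\mu(u_n)\,e_n^{*}$ (with $e_n^{*}$ the $n$th coordinate functional) and $S^{-1}(\ker\mu)=\langle e_1,\dots,e_{n-1}\rangle$, so $S^{-1}\A S$ is the span of the $E_{i,n}$ with $1\le i\le n-1$, as required.

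I expect the only genuine obstacle to be the descent of $\lambda_1$ from $\overline F$ to $F$ in the second step; everything after the identification of the one-dimensional row space as $F$-rational is routine linear algebra over $F$, and the passage from case a) to case b) is purely formal.
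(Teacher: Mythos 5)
Your proof is correct, and it shares the paper's overall strategy (descend the common covector $\lambda_1$ to $F$, then conjugate by an explicit change-of-basis matrix), but the descent step is carried out by a genuinely different mechanism. The paper first writes each member $X_i$ of an $F$-basis of $\A$ as a rank-one product $y_i\otimes\mu_i$ over $F$ (the $\mu_i$ a priori all different), then uses the relations $X_iX_j=0$ together with a wedge-product argument showing $y_1,\dots,y_{n-1}$ are linearly independent to conclude that every $\mu_i$ lies in the one-dimensional annihilator of $\mathrm{span}(y_1,\dots,y_{n-1})$, hence is proportional to a single $F$-rational $\nu$. You instead observe that the $F$-span $W$ of all rows of all matrices in $\A$ satisfies $W\otimes_F\overline F=\langle\lambda_1\rangle$, so $W$ is one-dimensional over $F$; this produces the $F$-rational covector $\mu$ in one stroke, without invoking the multiplicative relations or the independence of the $y_i$, and the rest is the same routine linear algebra. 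What the paper's route buys is that it never mentions row spaces or their behavior under field extension, working only with the algebra structure of $\A$; what yours buys is economy, plus a cleaner disposal of case b) by transposition (using that $(x\otimes\lambda)^{\rm T}=\lambda\otimes x$ swaps the two normal forms and that $F$-conjugacy is preserved under $g\mapsto g^{\rm T}$) where the paper simply asserts the other case is handled in the same way. Both arguments are complete and correct.
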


\begin{proof}
In either case, if $X,Y\in \A$ then $XY=0$ and $X$ has rank $1$.
For $X$ of rank $1$,
we have $XF^{n}=Fy$ for some $y\neq0$. Thus there exists $\mu\in\left(
F^{n}\right)  ^{\ast}$ with $Xz=\mu(z)y=\left(  y\otimes\mu\right)  (z)$ for
all $z$. We conclude that $\A$ has a basis over $F$ of the form 
$X_{i}=y_{i} \otimes\mu_{i}$ for $i=1,...,n-1$. 

We now assume that case a) is true (the argument for the other case is
essentially the same). In case a), there exists $z\in$ $\bar{F}^{n}$ such
that 
\[
\left\{  X_{1}(z),...,X_{n-1}(z)\right\}   
\]
is linearly independent over
$\bar{F}$. This implies that
\[
\mu_{1}(z)\cdots\mu_{n-1}(z)y_{1}\wedge\cdots\wedge y_{n-1}\neq0\text{. }
\]
Thus $y_{1},...,y_{n-1}$ are linearly independent. But $0=X_{i}X_{j}%
=\mu_{i}(y_{j})y_{i}\otimes\mu_{j}$. Thus $\mu_{i}(y_{j})=0$ for
all $j=1,...,n-1$. Let $\nu$ be a non-zero element of $\left(  F^{n}\right)
^{\ast}$ such that $\nu(y_{i})=0$ for all $i=1,...,n-1.$ Then $\nu$ is unique
up to non-zero scalar multiple. Thus $y_{i}\otimes\nu$, $i=1,...,n-1 $ is an
$F$--basis of $\A$. Clearly there exists $g\in \GL(n,F)$ such that if
$e_{1},...,e_{n}$ is the standard basis and $\xi_{1},...,\xi_{n}$ is the dual
basis then $gy_{i}=e_{i}$ and $\nu\circ g=\xi_{n}$. This completes the proof
in case a).
\end{proof}

\begin{prop}{\rm (Wallach)}
Suppose that $\L\subset \M_{n}(F)$ is a subalgebra such that
$\L\otimes_{F}\overline{F}$ \ is either:

a) conjugate to the parabolic subalgebra $\P_{\bar{F}}$,

b) conjugate to the parabolic subalgebra
 $\left(  \P_{\bar{F}}\right)  ^{\rm {T}}.$

\noindent In case a) $\L$ is $F$--conjugate to $\P_{F}$. 
In case b) $\L$ is $F$--conjugate
to $\P_{F}^{\rm {T}}$.
\end{prop}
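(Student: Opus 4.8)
The plan is to locate inside $\L$ a canonical two-sided ideal that the Lemma can recognize, namely the Jacobson radical, and then to recover $\P_F$ as the idealizer of that radical. I would treat case a) in full and reduce case b) to it by transposition: since transposition commutes with both base change and conjugation, the hypothesis that $\L\otimes_F\overline F$ is conjugate to $(\P_{\overline F})^{\rm T}$ says exactly that $\L^{\rm T}\otimes_F\overline F$ is conjugate to $\P_{\overline F}$, so case a) gives $\L^{\rm T}$ $F$-conjugate to $\P_F$, whence $\L$ is $F$-conjugate to $\P_F^{\rm T}$.

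First I would record that $\mathrm{rad}(\P_{\overline F})$ is the $\overline F$-span of $E_{1,n},\dots,E_{n-1,n}$. Indeed this span is a square-zero (hence nilpotent) two-sided ideal of $\P_{\overline F}$, and the quotient is isomorphic to $\M_{n-1}(\overline F)\times\overline F$, which is semisimple; so the radical has dimension $n-1$. Because $\overline F/F$ is separable (we are in characteristic $0$, so $F$ is perfect), the radical is compatible with base change: writing $\A:=\mathrm{rad}(\L)$, one has $\A\otimes_F\overline F=\mathrm{rad}(\L\otimes_F\overline F)$, an $F$-subspace of dimension $n-1$. In case a), $\L\otimes_F\overline F=g\,\P_{\overline F}\,g^{-1}$ for some $g\in\GL(n,\overline F)$, so $\A\otimes_F\overline F$ is the $\overline F$-span of the matrices $gE_{i,n}g^{-1}=(ge_i)\otimes(\xi_n\circ g^{-1})$ for $i=1,\dots,n-1$. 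Taking $x_i=ge_i$ and $\lambda_1=\xi_n\circ g^{-1}$ (so that $\lambda_1(x_i)=\xi_n(e_i)=0$), this is precisely a basis of the form required in case a) of the Lemma.

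Next I would invoke the Lemma to conclude that $\A$ is $F$-conjugate to the span of $E_{1,n},\dots,E_{n-1,n}$. Replacing $\L$ by this $\GL(n,F)$-conjugate, I may assume $\mathrm{rad}(\L)=\A_0$, the $F$-span of the $E_{i,n}$ with $1\le i\le n-1$. Since $\A_0$ is a two-sided ideal of $\L$, every $X\in\L$ satisfies $X\A_0\subseteq\A_0$ and $\A_0 X\subseteq\A_0$, so $\L$ lies in the idealizer of $\A_0$ in $\M_n(F)$. A direct computation identifies this idealizer with $\P_F=\M[R_n]+FE_{n,n}$: the condition $XE_{i,n}\in\A_0$ forces $X_{n,i}=0$ for each $i<n$, and the condition $E_{i,n}X\in\A_0$ forces the $n$-th row of $X$ to vanish off the $(n,n)$ entry. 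Hence $\L\subseteq\P_F$.

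Finally, a dimension count over $F$ closes the argument: the dimension of $\L$ equals that of $\L\otimes_F\overline F$ over $\overline F$, which is $\dim \P_{\overline F}=n^2-n+1$, the same as $\dim \P_F$; so the inclusion $\L\subseteq\P_F$ is an equality and $\L$ is $F$-conjugate to $\P_F$. The step I expect to require the most care is the compatibility of the radical with the extension $\overline F/F$: although this is standard over a perfect base field, I would want either to cite it explicitly or to verify directly that $\A\otimes_F\overline F$ is a nilpotent ideal with semisimple quotient, so as to be sure it coincides with the radical computed over $\overline F$ and thereby genuinely meets the hypotheses of the Lemma.
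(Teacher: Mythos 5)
Your proposal is correct, and it follows the same overall skeleton as the paper's proof --- locate a canonically defined $(n-1)$-dimensional ideal of $\L$ whose base change to $\overline{F}$ is conjugate to the span of $E_{1,n},\dots,E_{n-1,n}$, feed it to Lemma 4.1 to normalize it over $F$, recover $\P_F$ as the full set of matrices stabilizing that ideal, and finish with a dimension count --- but the key technical input is genuinely different. The paper views $\L$ as a Lie algebra, invokes Levi's theorem to write $\L = S \oplus R$ with $R$ the solvable radical, and takes $\A = [R,R]$; the compatibility of this decomposition with $\otimes_F\overline{F}$ is what makes $\A$ visible after conjugating to $\P_{\overline{F}}$. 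You instead take $\A = \mathrm{rad}(\L)$, the Jacobson radical of the associative algebra, and rely on the standard fact that the radical commutes with separable (here: any, since $\mathrm{char}\,F=0$) base extension; both constructions land on the same subspace, since $[R,R]$ and $\mathrm{rad}(\P_{\overline{F}})$ are each the span of the $E_{i,n}$. Your route stays entirely inside associative algebra and avoids Levi's theorem, at the cost of the radical--base-change lemma, which you rightly identify as the one point needing a citation or a direct verification (your fallback --- checking that $\mathrm{rad}(\L)\otimes_F\overline{F}$ is a nilpotent ideal with semisimple quotient --- does settle it). Two small points are worth making explicit: $\L$ is automatically unital (the identity of $\L\otimes_F\overline{F}\cong\P_{\overline{F}}$ descends to $\L$), so the finite-dimensional radical theory applies without fuss; and your characterization of $\P_F$ as the two-sided idealizer of the span of the $E_{i,n}$ plays the role of the paper's characterization of $\P_F$ as $\{X : [X,\A]\subset\A\}$ --- both computations are correct and give the same algebra. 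Your reduction of case b) to case a) by transposition is also fine and slightly cleaner than the paper's ``proved in the same way.''
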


\begin{proof}
We just do case a) as case b) is proved in the same way. 
We look upon $\L$ as a Lie
algebra over $F$. Then Levi's theorem \cite[p. 91]{Jacobson} implies
that $\L=S\oplus R$ with $S$ a semi-simple Lie algebra and $R$ the radical (the
maximal solvable ideal). Thus $\L\otimes_{F}\overline{F}$ $=S\otimes
_{F}\overline{F}\oplus R\otimes_{F}\overline{F}$. Therefore $R\otimes_{F}
\overline{F}$ is the radical of $\L\otimes_{F}\overline{F}$. If we conjugate
$\L\otimes_{F}\overline{F}$ to $\P_{\bar{F}}$ via $h \in \GL(n, \bar{F})$, 
then we see that
\[
h[R\otimes_{F}\overline{F},R\otimes_{F}\overline{F}]h^{-1}
\]
has basis $E_{i,n} \ ,\ i=1,...,n-1.$ Thus hypothesis a) of Lemma 4.1  is
satisfied for $\A=[R,R]$. There exists therefore $g \in \GL(n,F)$ such that
$g \A g^{-1}$ has basis $E_{i,n} \ , \  i=1,...,n-1$. Assume that we have
replaced $\L$ with $g \L g^{-1}$. Then $\A$ has basis $E_{i,n} \ , \ 
i=1,...,n-1$. Since $[\L,\A]\subset \A$ and $\P_{F}$ is exactly the set of
elements $X$ of $\M_{n}(F)$ such that $[X,\A]\subset \A$, we have $\L\subset
\P_{F}$. Thus $\L = \P_{F}$, as both sides have the same dimension.
\end{proof}

\section*{Acknowledgment}
We are very grateful to Nolan Wallach for providing the Appendix
and for many helpful suggestions.

\end{document}